\newtheorem{THEOREM}{Theorem}
\newenvironment{theorem}{\begin{THEOREM} \hspace{-.85em} {\bf :} \rm}                        {\end{THEOREM}}
\newtheorem{LEMMA}[THEOREM]{Lemma}
\newenvironment{lemma}{\begin{LEMMA} \hspace{-.85em} {\bf :} \rm}                      {\end{LEMMA}}
\newtheorem{COROLLARY}[THEOREM]{Corollary}
\newenvironment{proof}{\noindent {\bf Proof:} \hspace{.677em}}                      {\qed }
\newtheorem{DEFINITION}{Definition}
\newenvironment{definition}{\begin{DEFINITION} \hspace{-.85em} {\bf :} \rm}
                            {\end{DEFINITION}}
\newtheorem{CLAIM}[THEOREM]{Claim}
                      {\end{CLAIM}}
\newtheorem{PROPOSITION}[THEOREM]{Proposition}
                      {\end{PROPOSITION}}
\author{Yen Hung Chen\thanks{Email:yhchen@utaipei.edu.tw}
	}
\title[Approximability results for the $p$-centdian and the converse centdian problems]{Approximability results for the $p$-centdian and the converse centdian problems\thanks{An extended abstract of this paper was presented in the  18th International Conference Information and Knowledge Engineering (IKE 2019). This work was supported in part by the Ministry of Science and Technology of Taiwan under Contract MOST 105-2221-E-845-002, 107-2115-M-845-001 and 110-2221-E-845-001.}}
\affiliation{
  Department of Computer Science, University of Taipei, No.1, Ai-Guo West Road, Taipei, Taiwan
}
\keywords{combinatorial optimization, computational complexity, approximation algorithm, NP-Complete; network location, $p$-centdian problem, converse centdian problem}
\begin{document}
\publicationdetails{24}{2022}{2}{8}{6877}
\maketitle
\begin{abstract}
 Given an undirected graph $G=(V,E)$ with a nonnegative edge length function and  an integer $p$, $0 < p < |V|$, the $p$-centdian problem is to find $p$ vertices (called the {\it centdian set}) of $V$ such that the {\it eccentricity} plus {\it median-distance} is minimized, in which 
the {\it eccentricity} is the maximum (length) distance of all vertices to their nearest {\it centdian set} and the {\it median-distance} 
is the total (length) distance of all vertices to their nearest {\it centdian set}. The {\it eccentricity} plus {\it median-distance} is called the {\it centdian-distance}.  The purpose of the $p$-centdian problem is to find $p$ open facilities (servers) which satisfy the quality-of-service of  the  minimum total distance ({\it median-distance}) and the  maximum distance ({\it eccentricity}) to their service customers,  simultaneously. If we converse the two criteria, that is given the bound of the {\it centdian-distance} and the objective function is to minimize the cardinality of the {\it centdian set}, this problem is called  the converse centdian problem. 
In this paper, we prove the $p$-centdian problem is NP-Complete. Then we design the first non-trivial brute force exact algorithms for the $p$-centdian problem  and the converse centdian problem, respectively.  Finally, we design two approximation algorithms for both problems.
\end{abstract}

\section{Introduction}
The {\it $p$-center problem}~\cite{Hak,kar,Tan} and {\it $p$-median problem}~\cite{Hak,kar1,Tan} are fundamental problems in graph theory and operations research.   
Let $G=(V,E,\ell)$ be an   undirected  graph  with  $\ell: E \rightarrow R^+$ on the edges. Given a vertex set $V^{\prime} \subset V$, for each vertex $v\in V$, we let  $d(v,V^{\prime})$ denote the shortest distance from $v$ to $V^{\prime}$  
(i.e., $d(v,V^{\prime})=\min_{u \in V^{\prime}} d(u,v)$, in which $d(u,v)$ is the length of the {shortest} path of $G$ from $u$  to $v$). 
The {\it eccentricity} of a vertex set $V^{\prime}$ is defined as the maximum distance of $d(v,V^{\prime})$ for all $v \in V$, denoted by $\pounds_C(V^{\prime})$ (i.e., $\pounds_C(V^{\prime})=max_{v \in V} d(v,V^{\prime})$).  The {\it median-distance} $\pounds_M(V^{\prime})$ of $V^{\prime}$ denotes the total distance of $d(v,V^{\prime})$ for all $v$ in $V$ 
(i.e., $\pounds_M(V^{\prime})=\sum_{v\in V} d(v,V^{\prime})$). 
Given an  undirected complete graph $G=(V,E,\ell)$ with a nonnegative edge length  function $\ell$ and an integer $p$, $0<p<|V|$,  the {\it $p$-center problem} ($p$CP) 
(respectively, the {\it $p$-median problem} ($p$MP)) is to find a vertex set $V^{\prime}$ in $V$, $|V^{\prime}|=p$, such that the {\it eccentricity} 
(respectively, the {\it median-distance}) of $V^{\prime}$ is minimized~\cite{Hak,kar,kar1,Tan}. 
Both problems had been shown to be NP-Complete~\cite{gar,kar,kar1}. Hence, many approximation algorithms~\cite{Ary,gon,gup,hoc,ple,Shmoys} and inapproximability results~\cite{hoc2,hsu,jain}  had  been  proposed for both problems.  
These two problems have many applications in  the network location, clustering, and social networks~\cite{abu,cha,das1,das2,dre,Hak,hoc,kar,kar1,mih,Pacheco,rev,rev2,tak,Tam,Tan}.  

Given  a set of customers on the network, the network location theory is concerned with the optimal locations of new facilities (servers) to minimize transportation distances (costs)  of serving these customers and  consider the population density area. 
The most fundamental problems of the network location theory are  the $p$CP and the $p$MP, respectively.
 The $p$CP is suitable for emergency services where the objective is to have the farthest  customers as close as possible to their facility centers. But this solution of the $p$CP may cause a  substantial increase in total distance (cost), thus this result takes a huge loss of the spatial efficiency.  
The $p$MP is suitable for locating  facilities providing a routine service, by  minimizing the average distances from customers to these selected facilities.  The solution of the $p$MP is beneficial in serving
centrally located and  high-population density areas but  sacrifices the remote and low-population  density areas~\cite{Perez1,Perez2,Tam2}. Motivated by the application of finding {\it p} open facilities (servers) which satisfy the quality-of-service of the  minimum total distance ({\it median-distance}) 
and the maximum distance  ({\it eccentricity}) 
to their service customers, simultaneously~\cite{Hal1,Hal2,hooker,Perez1,Perez2,Tam2},  
Halpern~\cite{Hal1,Hal2}  introduced a convex combination of the $1$CP and the $1$MP,  which he called the  {\it $1$-centdian problem}. Hooker et al.~\cite{hooker} studied the generalization of the {\it $1$-centdian problem}, called  the {\it $p$-centdian problem}.  Given an  undirected complete graph $G=(V,E,\ell)$ with a nonnegative edge length  function $\ell$, a real number $\lambda$, $0 \le \lambda \le 1$, and an integer $p$, $0<p<|V|$, the {\it $p$-centdian problem} ($p$DP) is to find a vertex set $V^{\prime}$ in $V$, $|V^{\prime}|=p$, such that the 
$\lambda\pounds_C(V^{\prime})+(1-\lambda)\pounds_M(V^{\prime})$ is  minimized~\cite{hooker}.  The vertex set $V^{\prime}$ is called  the {\it centdian set} 
and $\lambda\pounds_C(V^{\prime})+(1-\lambda)\pounds_M(V^{\prime})$ is called the {\it centdian-distance}.  
If the {\it centdian set} can be the continuum set of points on the edges of $G$, Hooker et al.~\cite{hooker} proposed the possible {\it centdian set} for the $p$DP. 
Perez-Brito et al.~\cite{Perez1} fixed the flaw of Hooker et al.~\cite{hooker} theorem for the $p$DP.  
Tamir et al.~\cite{Tam2} presented a polynomial time exact algorithm for the $p$DP on trees. 
Ben-Moshe et al.~\cite{BenMoshe} gave  $O(|V|log |V|)$ time exact algorithms for the $1$DP  on cycle graphs and cactus graphs, respectively. If  the induced subgraph by the {\it centdian set} is connected,  Nguyen et al.~\cite{Nguyen} proposed a  linear time algorithm for the $p$DP on unweighted block graphs  and proved the problem is NP-Complete on weighted block graphs. If $\lambda=0$, the $p$DP is equal to the  $p$MP, and however $\lambda=1$ the $p$DP is equal to the $p$CP. Hence, it is not hard to see that the $p$DP is NP-hard.  However, it is still unclear whether there exists a polynomial time deterministic  approximation algorithm for the $p$DP. Given an undirected graph $G=(V,E)$ and two independent minimization 
criteria with a bound on the first criterion, a {\it generic bicriteria network design problem} involves 
the minimization of  the second criterion but satisfies the bound on the first criterion  
 among all  possible subgraphs from $G$~\cite{Marathe}. 
Many multiple criteria problems had been studied~\cite{Chow,Goel,Kadaba,Kompella,Marathe}. 
Clearly, the $p$CP, $p$MP, and $p$DP  are one kind of  {\it bicriteria network design problems}. 
The first criterion is the cardinality of the vertex set $V^{\prime}$ and the second is the {\it eccentricity}, {\it median-distance}, and {\it centdian-distance}, respectively.  
Hence,  if we converse the two criteria, that is given the bound of the  {\it eccentricity} 
from each vertex to $V^{\prime}$ and the objective function  is to minimize the cardinality of the $V^{\prime}$, this problem is called   the {\it converse $p$-center problem} (also called the {\it balanced $p$-center problem})~\cite{bar1,bar2,gar}. 
Given a graph $G=(V,E,\ell)$ with a nonnegative edge length  function $\ell$ and an integer $U$, $U>0$,  
the {\it converse $p$-center problem}  is to find  a vertex set $V^{\prime}$ in $V$ with minimum cardinality such that  the {\it eccentricity} of $V^{\prime}$ is at most  $U$~\cite{bar1,bar2,gar}. This problem had been shown to be NP-Complete~\cite{gar} and  
a $(\log U+1)$-approximation algorithm had been proposed~\cite{bar1,bar2}.
However, the converse version of the {\it $p$-centdian problem} is undefined. 
Hence, we present the converse version of the {\it $p$-centdian problem},  called  the {\it converse centdian problem}. 
Given a  graph $G=(V,E,\ell)$ with a nonnegative edge length  function $\ell$ and two integers $\lambda$ and $U$, $0 \le \lambda \le 1$, $U>0$,  
the {\it converse centdian problem} (CDP) is to find  a vertex set $V^{\prime}$ in $V$ with minimum cardinality such that  $\lambda\pounds_C(V^{\prime})+(1-\lambda)\pounds_M(V^{\prime})$ of $V^{\prime}$ is at most  $U$. 
In this paper, we focus on a  special case of the {\it centdian-distance} for the $p$DP (respectively, CDP) : 
$\pounds_C(V^{\prime})+\pounds_M(V^{\prime})$ and discuss the complexity, the non-trivial brute force exact algorithms, and  the approximation algorithms for 
the  $p$DP and CDP, respectively.
First, we  prove that the $p$DP is  NP-Complete even when the {\it centdian-distance} is $\pounds_C(V^{\prime})+\pounds_M(V^{\prime})$.   
Then we present the first non-trivial brute force exact algorithms for the $p$DP and CDP, respectively. Finally, we
design a  $(1+\epsilon)$-approximation algorithm for the $p$DP  satisfying the cardinality of the {\it centdian set} is less than or equal to $(1+1/\epsilon)(ln|V|+1)p$ and a
$(1+1/\epsilon)(ln|V|+1)$-approximation algorithm for the CDP satisfying the {\it centdian-distance} is less than or equal to $(1+\epsilon)U$,  in which $\epsilon>0$, respectively.

The rest of this paper is organized as follows. In Section~\ref{pre}, some 
definitions and notations are given. In Section~\ref{hardnessresult}, we prove that the $p$DP is  NP-Complete even when the {\it centdian-distance} is $\pounds_C(V^{\prime})+\pounds_M(V^{\prime})$. In Section~\ref{sectexact}, we present   non-trivial brute force exact algorithms for the $p$DP and CDP, respectively. 
In Section~\ref{sectapp}, we design  a  $(1+\epsilon)$-approximation algorithm for the $p$DP 
satisfying the cardinality of the {\it centdian set} is less than or equal to $(1+1/\epsilon)(ln|V|+1)p$.
In Section~\ref{sectapp2}, we design  a $(1+1/\epsilon)(ln|V|+1)$-approximation algorithm for the CDP
satisfying the {\it centdian-distance} is less than or equal to $(1+\epsilon)U$,  in which $\epsilon>0$. 
Finally, we make a conclusion in Section~\ref{result}.  

\section{Preliminaries}
 \label{pre}
In this paper, a graph is simple, connected and undirected.
By $G=(V,E,\ell)$, we denote a graph $G$ with vertex set $V$, edge set $E$, and edge length function $\ell$. The edge length function is assumed to be nonnegative.  
We  use $|V|$ to denote the cardinality of vertex set $V$. Let $(v,v^{\prime})$ denote an edge connecting two vertices $v$ and $v^{\prime}$. 
For any vertex $v \in V$ is said to be $adjacent$ to a vertex $v^{\prime} \in V$ if vertices  $v$ and $v^{\prime}$  share a common edge $(v,v^{\prime})$.   

\begin{definition}  For $u,v \in V$, $SP(u,v)$ denotes a shortest path
between $u$ and $v$ on $G$. The shortest path length is denoted
by $d(u,v)=\sum_{e \in SP(u,v)}\ell (e)$.
\end{definition}

\begin{definition} Let $H$ be a vertex set  of $V$. For a vertex $v \in V$,
we let $d(v,H)$  denote the shortest distance from $v$ to $H$,
i.e., $d(v,H)= \min_{h \in H} \{d(v,h)\}$.
\end{definition}

\begin{definition} Let $H$ be a vertex set  of $V$. 
The {\it eccentricity} of $H$, denoted by
$\pounds_C(H)$, is the maximum distance of $d(v,H)$ for  all $v \in V$, i.e., $\pounds_C(H)=max_{v \in V} d(v,H)$. 
\end{definition}

\begin{definition} Let $H$ be a vertex set  of $V$. 
The {\it median-distance} of $H$, denoted by
$\pounds_M(H)$, is the  the total distance of $d(v,H)$ for  all $v \in V$, i.e.,  $\pounds_M(H)=\sum_{v\in V} d(v,H)$. 
\end{definition}

\begin{description}  
\item[\textbf{$p$CP}] ($p$-center problem)~\cite{Hak,kar,Tan}
\item[Instance:] A  connected, undirected, complete graph $G=(V,E,\ell)$ and an integer $p>0$.
\item[Question:] Find a  vertex set $V^{\prime}$, $|V^{\prime}|=p$, such that the {\it eccentricity} of $V^{\prime}$ is minimized.
\end{description}

\begin{description} 
\item[\textbf{$p$MP}] ($p$-median problem)~\cite{Hak,kar1,Tan}
\item[Instance:] A  connected, undirected, complete  graph $G=(V,E,\ell)$ and an integer $p>0$.
\item[Question:] Find a  vertex set $V^{\prime}$, $|V^{\prime}|=p$, such that the {\it median-distance} of $V^{\prime}$ is minimized.
\end{description}

\begin{description} 
\item[\textbf{$p$DP}] ($p$-centdian problem)~\cite{hooker}
\item[Instance:] A   connected, undirected, complete graph $G=(V,E,\ell)$ and an integer $p>0$.
\item[Question:] Find a  vertex set $V^{\prime}$, $|V^{\prime}|=p$, such that $\pounds_C(V^{\prime})+\pounds_M(V^{\prime})$ of $V^{\prime}$ is minimized.
\end{description}

For the $p$DP, we have two criteria. The first criterion is the cardinality of the {vertex set}  $V^{\prime}$ and the second is the $\pounds_C(V^{\prime})+\pounds_M(V^{\prime})$.  The vertex set $V^{\prime}$ is called  the {\it centdian set} 
and $\pounds_C(V^{\prime})+\pounds_M(V^{\prime})$ is called  the {\it centdian-distance}. 
Hence, we can converse the two criteria, that is given the bound of the {\it centdian-distance}  of  the {\it centdian set}   and 
the objective function  is to minimize the cardinality of the {\it centdian set}.

\begin{description}   
\item[\textbf{CDP}] (converse centdian problem)
\item[Instance:] A connected, undirected graph $G=(V,E,\ell)$ and an integer $U>0$.
\item[Question:] Find a  vertex set $V^{\prime}$ with $\pounds_C(V^{\prime})+\pounds_M(V^{\prime}) \leq U$ such that  the cardinality  of the  
$V^{\prime}$ is minimized.
\end{description}
 The following examples illustrate the $p$DP and the CDP.  Consider the instance shown in Fig.~\ref{Fig1}, in which the graph $G=(V,E,\ell)$ 
and integers  $p=2$ and $U=117$. 
An optimal solution of $G$ for the $p$DP is shown in Fig.~\ref{Fig2}, in which the {centdian-distance} is $252$
An optimal solution of $G$ for the CDP is shown in Fig.~\ref{Fig3}, in which the {centdian set} is $\{A,B,D\}$.

\begin{figure}
\begin{center}
\centerline{\epsfbox{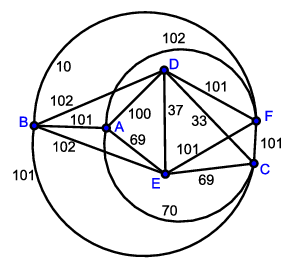}}
\caption{An instance: complete graph $G=(V,E,\ell)$, $p=2$ and $U=117$.}
\label{Fig1}
\end{center}
\end{figure}

\begin{figure}
\centerline{\epsfbox{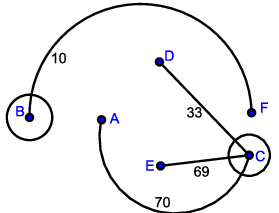}}
\caption{The optimal solution $\{B,C\}$ for the $2$DP. (Note that $\pounds_C(\{B,C\})+\pounds_M(\{B,C\})=252$)}
\label{Fig2}
\end{figure}

\begin{figure}
\centerline{\epsfbox{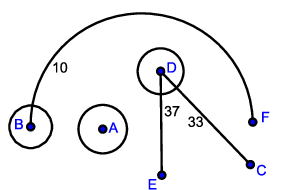}}
\caption{The optimal solution $\{A,B,D\}$ for the CDP. (Note that $\pounds_C(\{A,B,D\})+\pounds_M(\{A,B,D\})=117$)}
\label{Fig3}
\end{figure}

In this paper, 
we will prove that the $p$DP  is NP-Complete  by 
a {\it reduction}  from the   {\it dominating set problem}~\cite{ber,coc,ore,west} to the $p$DP. 
Hence, we  review the definition of the {\it dominating set problem}.  
A {\it dominating set} of $G$, denoted by  $\mathcal {Z}$, is a subset of $V$  such that each vertex in $V \setminus \mathcal {Z}$ is {\it adjacent} to a vertex in 
$\mathcal {Z}$~\cite{ber,coc,ore,west}. 
\begin{description}  
\item[\textbf{DSP}] (dominating set problem)~\cite{ber,coc,ore,west}
\item[Instance:] A  connected, undirected graph $G=(V,E)$.
\item[Question:] Find a {\it  dominating set} $\mathcal {Z^{\prime}}$ with minimum cardinality.
\end{description}

Note that the DSP had been shown to be NP-Complete~\cite{gar}.
 
Since  our approximation algorithm for the $p$DP is based on  the {\it set cover problem}~\cite{chv,joh,lov}.  We  also review the definition of the {\it set cover problem}.  Given a finite set $\mathcal{U}$ of elements and a collection 
$\mathcal{S}$ of (non-empty) subsets of $\mathcal{U}$.   
A  {\it set cover}~\cite{chv,joh,lov} is to find  a subset $\mathcal{S}^{\prime} \subseteq \mathcal{S}$ such that every element in  $\mathcal{U}$ belongs to at least one element of $\mathcal{S}^{\prime}$.

\begin{description} 
\item[\textbf{SCP}] (Set cover problem)~\cite{chv,joh,lov}
\item[Instance:]  A finite set $\mathcal{U}$ of elements, a collection $\mathcal{S}$  of (non-empty) subsets of $\mathcal{U}$.
\item[Question:] Find a {\it set cover} $\mathcal{S}^{\prime\prime}$  such that the number of sets in $\mathcal{S}^{\prime\prime}$  is minimized. 
\end{description}

\section{Hardness Result for the $p$DP}
\label{hardnessresult}
In this section, we prove that the $p$DP is NP-Complete. We transform the DSP to the $p$DP  by the {\it reduction}. Hence we need to define 
$p$DP and DSP decision problems.

\begin{description}
\item[\textbf{$p$DP Decision Problem}]
\item[Instance:] A  connected, undirected complete  graph $G=(V,E,\ell)$ and two integers $p>0$ and $U>0$.
\item[Question:]  Does there exist a  vertex set $V^{\prime}$, $|V^{\prime}|=p$, such that $\pounds_C(V^{\prime})+\pounds_M(V^{\prime}) \leq U$?  
\end{description}

\begin{description}
\item[\textbf{DSP Decision Problem}]
\item[Instance:] A  connected, undirected graph $G=(V,E)$, and a positive integer $\kappa$.
\item[Question:] Does there exist a  dominated set $\mathcal {Z}$ such that 
$|\mathcal {Z}|$ is less than or equal to $\kappa$?  
\end{description}

\begin{theorem} The $p$DP decision problem is  NP-Complete.  \label{NPC}
\end{theorem}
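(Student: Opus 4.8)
The plan is to establish membership in NP and then to give a polynomial-time many-one reduction from the DSP decision problem. Membership is routine: a certificate is a vertex set $V' \subseteq V$ with $|V'| = p$, and given such a set one computes all pairwise shortest-path lengths (e.g., $|V|$ runs of Dijkstra), then $d(v,V')$ for every $v \in V$, then $\pounds_C(V')$ and $\pounds_M(V')$, and finally checks $\pounds_C(V') + \pounds_M(V') \le U$ — all in polynomial time.

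For hardness, given a DSP decision instance $(G = (V,E), \kappa)$ I would first dispose of the trivial case $\kappa \ge |V|$ (then $V$ itself is a dominating set, a yes-instance), so assume $0 < \kappa < |V|$. I then build the complete graph $G' = (V, E', \ell)$ on the same vertex set with $\ell(u,v) = 1$ when $(u,v) \in E$ and $\ell(u,v) = 2$ otherwise. Since every non-edge of $G$ gets length $2$ and no path between distinct vertices can have length below $1$, the shortest-path distance in $G'$ is exactly $d(u,v) = 1$ if $u,v$ are adjacent in $G$ and $d(u,v) = 2$ otherwise; in particular $G'$ is a legitimate $p$DP instance. Set $p = \kappa$ and $U = |V| - \kappa + 1$; the construction is clearly polynomial.

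Next I would verify the two directions through a distance gap. If $G$ has a dominating set of size at most $\kappa$, pad it with arbitrary extra vertices to a set $V'$ of size exactly $p$ (still dominating); then $d(v,V') = 0$ for $v \in V'$ and $d(v,V') = 1$ for $v \notin V'$, so $\pounds_C(V') = 1$, $\pounds_M(V') = |V| - p$, and the centdian-distance equals $|V| - p + 1 = U$, a yes-instance. Conversely, if $G$ has no dominating set of size $\le p$, then every $V'$ with $|V'| = p$ fails to dominate, so some $v \notin V'$ is nonadjacent in $G$ to all of $V'$ and thus $d(v,V') = 2$; hence $\pounds_C(V') = 2$ and $\pounds_M(V') \ge 2 + (|V| - p - 1)$, forcing centdian-distance $\ge |V| - p + 3 > U$, a no-instance. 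This gives $(G,\kappa)$ is a yes-instance of DSP if and only if $(G', p, U)$ is a yes-instance of the $p$DP decision problem, completing the reduction.

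The remaining verifications — that the distances in $G'$ are exactly $1$ and $2$, that padding a small dominating set keeps it dominating, and that $|V'| = p < |V|$ is compatible with the $p$DP instance requirements — are all straightforward. The one point demanding a little care, and the main obstacle, is the choice of $U$: it must make the "dominating" and "non-dominating" cases fall on opposite sides of the threshold with a strict gap (here the slack is $2$), and it must correctly translate the DSP cardinality bound "$\le \kappa$" into the exact-size condition "$|V'| = p$" of the decision problem, which the padding argument handles.
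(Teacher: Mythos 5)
Your proposal is correct and follows essentially the same reduction as the paper: a transformation from the dominating set decision problem to the complete graph with unit lengths on original edges, with $p=\kappa$ and $U=|V|-\kappa+1$, and the same gap argument that a non-dominated vertex forces the centdian-distance above $U$. The only differences are cosmetic (weight $2$ on non-edges rather than shortest-path lengths, an explicit padding step to turn ``size at most $\kappa$'' into ``size exactly $p$'', and arguing the backward direction by contrapositive), and if anything these make the write-up slightly more careful than the paper's.
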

\begin{proof}
First, it is easy to see that  the  $p$DP decision problem  is in NP. Then we show the {\it reduction}: the transformation from 
the  DSP decision problem  to the   $p$DP decision problem.

Let a graph $G=(V,E)$ and  a positive integer $\kappa$ be an instance of the DSP decision problem. 
 We transform it into an instance of the $p$DP decision problem, say $\overline{G}=(\overline{V},\overline{E}, \ell)$ 
and two positive integers $p$ and $U$, as follows. 

\begin{description}
\item[] $\overline{V}={V}$. 
\item[] $\overline{E}={E}$. 
\item[] For each edge $(u,v) \in \overline{E}$,  
\begin{equation}
 \ell(u,v)=  \left\{\begin{array}{ll}
                     1,& \mbox{if $(u,v) \in E$}\\
                     d(u,v), & \mbox{otherwise.}
                \end{array} \right. 
\end{equation} 
\item[] $U=|V|-\kappa+1$ and $p=\kappa$.
\end{description}

Now, we show that  there is  a dominating set $\mathcal{Z}$ such that $|\mathcal {Z}|$ is  $\kappa$ if and only if there is 
a  vertex set $\overline{V^{\prime}}$ in  $\overline{G}$  such that the $|\overline{V^{\prime}}|$ is $p$ and 
 $\pounds_C(\overline{V^{\prime}})+\pounds_M(\overline{V^{\prime}})$ is $U$. 

(Only if) If there exists a dominating set $\mathcal{Z}$ in $G$ and the cardinality of $\mathcal{Z}$ is  at most $\kappa$. Then we 
 choice the corresponding vertex set $\overline{V^{\prime}}$ in $\overline{G}$ of the dominating set $\mathcal{Z}$ in $G$. Hence, we have $\pounds_C(\overline{V^{\prime}})=1$ and $\pounds_M(\overline{V^{\prime}})=|V|-\kappa$.
(If) If there exists a vertex set $\overline{V^{\prime}}$ in  $\overline{G}$  such that   $|\overline{V^{\prime}}|$ is $p$ and 
 $\pounds_C(\overline{V^{\prime}})+\pounds_M(\overline{V^{\prime}})$ is $U$. Clearly, each vertex $v$ in $V \setminus \overline{V^{\prime}}$, 
$d(v,\overline{V^{\prime}})=1$, otherwise  $\pounds_C(\overline{V^{\prime}})+\pounds_M(\overline{V^{\prime}})>U=|V|-p+1$. Hence, 
we  choice the corresponding vertex set $\mathcal{Z}$ in $G$ of  the vertex set  $\overline{V^{\prime}}$ in $\overline{G}$ 
and $\mathcal{Z}$ is  a dominating set in $G$ with $|\mathcal{Z}|=p$.
\end{proof}

\section{Exact Algorithms for the $p$DP and CDP} \label{sectexact}
In this section, we show  integer programmings to solve the $p$DP and CDP, respectively. We combine the integer programmings for the $p$MP and $p$CP by~\cite{das1}.
Given an undirected complete graph $G=(V,E,\ell)$ with a nonnegative edge length  function $\ell$,  the $p$DP can be formulated as an integer programming  
($I$) as follows. 

\begin{description}
 \item[] 
\begin{eqnarray}
 &\mbox{minimize}& \sum_{i \in V} \sum_{j \in V} d(i,j) x_{i,j}+C \\
&\mbox{subject to}& \nonumber \\ 
        &&\sum_{j \in V} x_{i,j} = 1, \forall i \in V \\
			   &&\sum_{j \in V} y_j = p \\
				  &&x_{i,j} \le  y_j,  \forall i,j \in V \\
           	&&\sum_{j \in V} d(i,j)x_{i,j} \le C, \forall i \in V  \\
						&&x_{i,j},y_j \in \{0,1\} \\
						 &&C \ge 0, \\ \nonumber 
\end{eqnarray}
\end{description}
where the variable $y_j=1$ if and only if vertex $j$ is chosen as a
centdian, and the variable $x_{i,j} = 1$ if and only if $y_j = 1$ and vertex $i$ is assigned to vertex $j$, 
 and $C$ is a feasible {\it eccentricity}.
 For  completeness, we list the exact algorithm for the $p$DP  as follows.

\begin{description}  
\item[\textbf{Algorithm OPT-$p$DP}]
\item[\textbf{Input:}]   A  connected, undirected complete graph $G=(V,E,\ell)$ with  a nonnegative length function $\ell$ on edges and  an integer $p>0$.
\item[\textbf{Output:}]  A vertex set $P_{opt}$ with $|P_{opt}|=p$. 
\item[1.] Use the integer programming ($I$) to find all  $y_j=1$ and put the corresponding vertex $j$ of $y_j$ to $P_{opt}$. 
\item[2.] Return $P_{opt}$.
\end{description}

It is  easy to show that Algorithm OPT-$p$DP is an exact algorithm for the $p$DP. However, to solve an integer programming is NP-hard~\cite{cormen,gar}.
Hence, next section we show $(1+\epsilon)$-approximation algorithm for the $p$DP satisfying the cardinality of centdian set is less than or equal to $(1+1/\epsilon)(ln|V|+1)p$,  $\epsilon>0$. 

Next, we modify integer programming ($I$) to design another integer programming ($II$) for the CDP with an integer $U$ as follows.

\begin{description}
 \item[] 
\begin{eqnarray}
 &\mbox{minimize}& \sum_{j \in V} y_j \\
&\mbox{subject to}& \nonumber \\ 
        &&\sum_{j \in V} x_{i,j} = 1, \forall i \in V \\
			   &&  \sum_{i \in V} \sum_{j \in V} d(i,j) x_{i,j}+C \le U\\
				  &&x_{i,j} \le  y_j,  \forall i,j \in V \\
           	&&\sum_{j \in V} d(i,j)x_{i,j} \le C, \forall i \in V  \\
						&&x_{i,j},y_j \in \{0,1\} \\
						 &&C \ge 0. \\ \nonumber 
\end{eqnarray}
\end{description}
 
 For  completeness, we list the exact algorithm for the CDP  as follows.

\begin{description}  
\item[\textbf{Algorithm OPT-CDP}]
\item[\textbf{Input:}]   A  connected, undirected complete graph $G=(V,E,\ell)$ with  a nonnegative length function $\ell$ on edges and  an integer $U>0$.
\item[\textbf{Output:}]  A vertex set $P_{opt}$ with $\pounds_C(P_{opt})+\pounds_M(P_{opt}) \le U$. 
\item[1.] Use the integer programming ($II$) to find all  $y_j=1$ and put the corresponding vertex $j$ of $y_j$ to $P_{opt}$. 
\item[2.] Return $P_{opt}$.
\end{description}

\section{An Approximation Algorithm for the $p$DP}
\label{sectapp}
In this section, 
 we show $(1+\epsilon)$-approximation algorithm for the $p$DP satisfying the cardinality of {\it centdian set} is less than or equal to $(1+1/\epsilon)(ln|V|+1)p$, $\epsilon>0$. 
First, we relax the integer programming  ($I$) for the $p$DP to the linear programming ($I_L$) to solve the $p$DP called the fractional $p$DP
as follows.

\begin{description}
\item[] 
\begin{eqnarray}
&\mbox{minimize}& \sum_{i \in V} \sum_{j \in V} d(i,j) x_{i,j}+C \\
&\mbox{subject to}& \nonumber \\ 
   &     &\sum_{j \in V} x_{i,j} = 1, \forall i \in V \\
			   &&\sum_{j \in V} y_j = p \\
				  &&x_{i,j} \le  y_j,  \forall i,j \in V \\
           	&&\sum_{j \in V} d(i,j)x_{i,j} \le C, \forall i \in V 	 \\
						 &&0\le x_{i,j},y_j \le 1\\
						 &&C \ge 0. \\ \nonumber 
\end{eqnarray}
\end{description}
 The main difference between $I_L$ and $I$ is that $y_j$ and $x_{i,j}$ can take rational values between $0$ and $1$ for  $I_L$. 
Let $\tilde{y}$ and $\tilde{x}$ be the output values of the linear programming $I_L$.
Then it is clear that the {\it centdian-distance} of the optimal solution for the fractional $p$DP is a lower bound on the 
{\it centdian-distance} of 
the optimal solution for the $p$DP. Moreover, the linear programming can be solved in polynomial time~\cite{karlp,kha}.

\begin{lemma} Given a solution $\tilde{y}=\{\tilde{y}_1, \tilde{y}_2, \ldots, \tilde{y}_{|V|}\}$
 for the fractional $p$DP, we can determine the
optimal fractional values for $\tilde{x}_{i,j}$.  \label{fkdp}
\end{lemma}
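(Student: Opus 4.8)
The plan is to observe that once the $\tilde{y}_j$ values are fixed, the linear program $I_L$ decouples across the clients $i \in V$: for each fixed $i$, the relevant constraints are $\sum_{j\in V}\tilde{x}_{i,j}=1$, $\tilde{x}_{i,j}\le \tilde{y}_j$, $\tilde{x}_{i,j}\ge 0$, and $\sum_{j\in V} d(i,j)\tilde{x}_{i,j}\le C$, and the objective contribution of client $i$ is $\sum_{j\in V} d(i,j)\tilde{x}_{i,j}$ (the $C$ term and the $\sum_j \tilde{y}_j = p$ constraint do not involve the $x_{i,\cdot}$ variables beyond the per-client coupling through $C$). So the sub-problem for each $i$ is: given a budget of total mass $1$ to distribute over the coordinates $j$ subject to the cap $\tilde{x}_{i,j}\le\tilde{y}_j$, minimize the weighted sum $\sum_j d(i,j)\tilde{x}_{i,j}$.

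First I would sort the vertices $j$ in nondecreasing order of $d(i,j)$, say $d(i,j_1)\le d(i,j_2)\le\cdots$, and then define $\tilde{x}_{i,j}$ greedily: assign to the closest vertices as much mass as their $\tilde{y}$-cap permits until a total of $1$ is reached. Concretely, let $k$ be the smallest index with $\sum_{t\le k}\tilde{y}_{j_t}\ge 1$; set $\tilde{x}_{i,j_t}=\tilde{y}_{j_t}$ for $t<k$, set $\tilde{x}_{i,j_k}=1-\sum_{t<k}\tilde{y}_{j_t}$, and $\tilde{x}_{i,j_t}=0$ for $t>k$. Such an index $k$ exists because $\sum_j\tilde{y}_j = p\ge 1$. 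This assignment is feasible (it respects the caps, sums to $1$, is nonnegative), and an exchange argument shows it is optimal: if some optimal solution put mass on a farther vertex while a cap on a nearer vertex was not saturated, shifting an infinitesimal amount of mass from the farther to the nearer vertex would not increase the objective, so the greedy fill is optimal. The value $C$ is then forced to be at least $\max_i \sum_j d(i,j)\tilde{x}_{i,j}$, and since minimizing the objective also wants $C$ small, we take $C=\max_i\sum_j d(i,j)\tilde{x}_{i,j}$; this is consistent with optimality of the full LP given $\tilde y$.

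The routine part is the exchange argument and checking feasibility; the only mildly delicate point is making sure that fixing $\tilde y$ to the LP-optimal vector and then choosing $\tilde x$ greedily actually reproduces an optimal $(\tilde x,\tilde y)$ pair for the whole program — i.e., that there is no interaction forcing a suboptimal per-client choice. This follows because the objective is separable as $\big(\sum_i \sum_j d(i,j)\tilde x_{i,j}\big) + C$ with $C\ge \sum_j d(i,j)\tilde x_{i,j}$ for every $i$, and reducing any single $\tilde x_{i,j}$-distribution to its greedy optimum can only decrease (or keep) both the summation term and each of the constraints bounding $C$, hence can only decrease the objective; so an optimal $\tilde x$ must be the greedy one client-by-client. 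I expect this separability observation to be the conceptual crux, after which the construction and its verification are short.
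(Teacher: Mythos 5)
Your construction is exactly the one in the paper's proof (following Ahn et al.): sort the vertices by $d(i,j)$, fill $\tilde{x}_{i,j}$ up to the caps $\tilde{y}_j$ in that order until total mass $1$ is reached, and set the rest to zero. You additionally spell out the exchange argument and the separability through $C$ that the paper leaves implicit, but the approach is essentially the same and your proof is correct.
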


\begin{proof}
Similar with~\cite{Ahn}, for each $i \in V$, we sort $d(i,j)$, $j\in V$, so that 
$d(i,j_1(i)) \le d(i,j_2(i)) \le \ldots \le d(i,j_{|V|}(i))$
and let $s$ be a value such that $\sum_{k=1}^{s-1} \tilde{y}_{j_k(i)}\le 1 \le \sum_{k=1}^{s} \tilde{y}_{j_k(i)}$.
Then let  $\tilde{x}_{i,j}=\tilde{y}_j$ for each $j=j_1(i), j_2(i),\ldots, j_{s-1}(i)$, $\tilde{x}_{i,j_s(i)}=1-\sum_{k=1}^{s-1}\tilde{y}_{j_k(i)}$, and otherwise
$\tilde{x}_{i,j}=0$.
 \end{proof}

Given  a fractional solution $\tilde{x}_{i,j}$, for each $i \in V$, 
let 
$\tilde{D}(i)= \sum_{j \in V} d(i,j) \tilde{x}_{i,j}$ be the distance of assigning vertex
$i$ to its fractional centdian. Given  $\epsilon> 0$, we also let 
the neighborhood set $N(i)$ of vertex $i$  be 
$N(i)=\{j\in V| d(i,j) \le (1+\epsilon) \tilde{D}(i)\}$.

\begin{lemma}~\cite{lin} For each $i \in V$ and $\epsilon > 0$, 
we have  $\sum_{j \in N(i)} \tilde{y}_j \ge \sum_{j \in N(i)} \tilde{x}_{i,j} > \epsilon/(1+\epsilon)$. \label{fkdp1}
\end{lemma}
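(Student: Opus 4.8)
The plan is to prove the two inequalities separately. The first inequality, $\sum_{j \in N(i)} \tilde{y}_j \ge \sum_{j \in N(i)} \tilde{x}_{i,j}$, is immediate from the linear programming constraint $x_{i,j} \le y_j$ (constraint on line with $x_{i,j}\le y_j$ in $I_L$): summing this inequality over all $j \in N(i)$ gives the claim. So the real content is the strict lower bound $\sum_{j \in N(i)} \tilde{x}_{i,j} > \epsilon/(1+\epsilon)$.

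For that bound I would argue by a Markov-type (averaging) estimate on the complement. Fix $i \in V$. By definition $\tilde{D}(i) = \sum_{j\in V} d(i,j)\tilde{x}_{i,j}$, and by the assignment constraint $\sum_{j\in V}\tilde{x}_{i,j} = 1$, so $\tilde{x}_{i,\cdot}$ is a probability distribution over $V$ and $\tilde{D}(i)$ is its expected distance. For $j \notin N(i)$ we have $d(i,j) > (1+\epsilon)\tilde{D}(i)$, hence
\begin{equation}
\tilde{D}(i) = \sum_{j\in V} d(i,j)\tilde{x}_{i,j} \ge \sum_{j\notin N(i)} d(i,j)\tilde{x}_{i,j} > (1+\epsilon)\tilde{D}(i) \sum_{j\notin N(i)} \tilde{x}_{i,j}.
\end{equation}
If $\tilde{D}(i) > 0$, dividing by $(1+\epsilon)\tilde{D}(i)$ yields $\sum_{j\notin N(i)}\tilde{x}_{i,j} < 1/(1+\epsilon)$, and since $\sum_{j\in V}\tilde{x}_{i,j}=1$ we get $\sum_{j\in N(i)}\tilde{x}_{i,j} > 1 - 1/(1+\epsilon) = \epsilon/(1+\epsilon)$, as desired. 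The remaining case $\tilde{D}(i)=0$ has to be handled separately: then $d(i,j)\tilde{x}_{i,j}=0$ for all $j$, so every $j$ with $\tilde{x}_{i,j}>0$ has $d(i,j)=0$ and thus lies in $N(i)=\{j: d(i,j)\le 0\}$; hence $\sum_{j\in N(i)}\tilde{x}_{i,j}=1 > \epsilon/(1+\epsilon)$ (using $\epsilon>0$). I would also note the strictness in the main case is legitimate because $N(i)$ is nonempty — it contains $i$ itself, since $d(i,i)=0\le(1+\epsilon)\tilde{D}(i)$ — so the term $d(i,i)\tilde{x}_{i,i}$ is excluded from the summation over $j\notin N(i)$, which is where the strict inequality comes from in the chain above.

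The main obstacle is really just bookkeeping: making sure the strict inequality is genuinely justified (the degenerate case $\tilde{D}(i)=0$, and the subtlety that the strict $>$ in the Markov step needs the excluded set $N(i)$ to carry positive mass or at least needs care about when $d(i,j)=(1+\epsilon)\tilde D(i)$ exactly). Since the paper cites~\cite{lin} for this lemma, I expect the intended proof is exactly this short averaging argument, and I would present it in the compact three-line form above.
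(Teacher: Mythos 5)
The paper never proves this lemma at all: it is imported verbatim from Lin and Vitter \cite{lin}, so there is no in-paper argument to compare against, and your reconstruction is the standard (and surely intended) one --- the first inequality from the LP constraint $x_{i,j}\le y_j$ summed over $N(i)$, the second by a Markov-type averaging bound using $\sum_{j\in V}\tilde{x}_{i,j}=1$ and $d(i,j)>(1+\epsilon)\tilde{D}(i)$ for $j\notin N(i)$. The argument is correct in substance; the only slip is in how you justify strictness. The strict inequality in your displayed chain holds precisely when the complement of $N(i)$ carries positive $\tilde{x}_{i,\cdot}$-mass (if $\sum_{j\notin N(i)}\tilde{x}_{i,j}=0$ the chain reads $0>0$, but then $\sum_{j\in N(i)}\tilde{x}_{i,j}=1>\epsilon/(1+\epsilon)$ trivially); your appeal to $i\in N(i)$ buys nothing, since the excluded term is $d(i,i)\tilde{x}_{i,i}=0$. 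The cleanest bookkeeping is to split on $t:=\sum_{j\notin N(i)}\tilde{x}_{i,j}$: if $t=0$ the bound is immediate, and if $t>0$ then some $j\notin N(i)$ has $\tilde{x}_{i,j}>0$, which forces $\tilde{D}(i)>0$ (subsuming your separate $\tilde{D}(i)=0$ case) and makes the Markov step genuinely strict, giving $t<1/(1+\epsilon)$ and hence the claim.
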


Then 
we transform the $p$DP to the SCP.  An instance of SCP contains a finite set $\mathcal{U}$ of elements, a collection  $\mathcal{S}$ of (non-empty) subsets of $\mathcal{U}$. 
We let  each vertex $i \in V$ correspond to each element in $\mathcal{U}$, and  
each vertex $j \in V$ with $\tilde{y}_j>0$ correspond to each set in  $\mathcal{S}$, respectively. Then 
for each vertex $i \in V$, if $j \in N(i)$, 
then the corresponding element of $i$ in $\mathcal{U}$ belongs to  the corresponding set of  $j$  in $\mathcal{S}$.
 
Then we use the  greedy approximation algorithm for the SCP  whose approximation ratio is $(\ln |\mathcal{U}|+1)$~\cite{chv,joh, lov} to find a set cover of $\mathcal{U}$ and $\mathcal{S}$. 
Let $A_{SCP}$ be the greedy approximation algorithm for the SCP. Finally, output the corresponding vertex set  for the output set  by $A_{SCP}$. 
Given a graph $G=(V,E, \ell)$, let $P_{APX}$ be a vertex set in $G$. 
Initially, $P_{APX}$ is empty.
Now, for clarification, we describe the $(1+\epsilon)$-approximation  algorithm
for the $p$DP as follows. 

\begin{description} 
\item[\textbf{Algorithm APX-$p$DP}]
\item[\textbf{Input:}]   A  connected, undirected complete graph $G=(V,E,\ell)$ with  a nonnegative length function $\ell$ on edges, an integer $p>0$, and  a real number $\epsilon$, $0<\epsilon<1$.
\item[\textbf{Output:}]  A vertex set $P_{APX}$ with $|P_{APX}| \leq (1+1/\epsilon)(ln|V|+1)p$. 
\item[1.] Let $P_{APX} \leftarrow \emptyset$. 
\item[2.] Use linear programming  ($I_L$) to solve the fractional $p$DP and find  the fractional solutions $\tilde{y}$ and $\tilde{x}$.
\item[3.] For each $i \in V$, compute $\tilde{D}(i)$ and find its neighborhood set $N(i)=\{j\in V| d(i,j) \le (1+\epsilon) \tilde{D}(i)\}$.
\item[4.] {\bf For} each $i \in V$  {\bf do}
\begin{description}
\item [\ ] create an element $u_i$  in $\mathcal{U}$.
\end{description}
 \item[\ \ \ \ \ ] {\bf end for}  
\item[5.] {\bf For} each $j\in V$ with  $\tilde{y}_j>0$ {\bf do}
\begin{description}
\item [\ ] create a subset $\mathcal{S}_j=\{u_i|$ if $j \in N(i)\}$  of $\mathcal{U}$ in $\mathcal{S}$.
\end{description}
 \item[\ \ \ \ \ ] {\bf end for}  
\item[6.] Use  the greedy approximation algorithm $A_{SCP}$ for the SCP to find a  set cover $\mathcal{S}^{\prime}$ of  the instance $\mathcal{U}$ and $\mathcal{S}$. 
Let $y_j=1$ if $\mathcal{S}_j \in \mathcal{S}^{\prime}$, and then $x_{i,j}=1$ if  set $\mathcal{S}_j \in \mathcal{S}^{\prime}$ and $u_i$ is covered by $\mathcal{S}_j$,
and otherwise is $0$.
\item[7.] Let $P_{APX}$ be the corresponding vertex set of $\mathcal{S}^{\prime}$.
\end{description}

The result of this section is summarized in the following theorem.
\begin{theorem} Algorithm APX-$p$DP is a $(1+\epsilon)$-approximation algorithm for the $p$DP
satisfying $|P_{APX}| \le (1+1/\epsilon)(ln|V|+1)p$,  in which $\epsilon>0$. \label{theorem pdp}
\end{theorem}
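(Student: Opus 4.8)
The plan is to establish two things: first, that the cardinality bound $|P_{APX}| \le (1+1/\epsilon)(\ln|V|+1)p$ holds, and second, that the centdian-distance of $P_{APX}$ is at most $(1+\epsilon)$ times the optimal centdian-distance. For the cardinality bound, I would start from the fractional solution $\tilde y$. Note that $\sum_{j\in V}\tilde y_j = p$. By Lemma~\ref{fkdp1}, for every $i\in V$ the sets $\{\tilde y_j : j\in N(i)\}$ sum to more than $\epsilon/(1+\epsilon)$. Hence the vector $\frac{1+\epsilon}{\epsilon}\tilde y$ is a feasible \emph{fractional} set cover of the SCP instance $(\mathcal U,\mathcal S)$ built in Steps~4--5: for each element $u_i$, the total fractional weight on sets $\mathcal S_j$ containing it is $\sum_{j\in N(i)}\frac{1+\epsilon}{\epsilon}\tilde y_j > 1$. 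Therefore the optimal (integral) set cover has size at most the value of this fractional cover, which is $\frac{1+\epsilon}{\epsilon}\sum_j \tilde y_j = (1+1/\epsilon)p$. Since $A_{SCP}$ is a $(\ln|\mathcal U|+1)$-approximation and $|\mathcal U|=|V|$, the returned cover $\mathcal S'$ satisfies $|\mathcal S'| \le (\ln|V|+1)\cdot(1+1/\epsilon)p$, and $|P_{APX}|=|\mathcal S'|$.

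For the approximation ratio on the centdian-distance, I would argue that $P_{APX}$ serves every vertex reasonably well. Fix $i\in V$. Since $\mathcal S'$ is a set cover, some $\mathcal S_j\in\mathcal S'$ contains $u_i$, which by construction means $j\in N(i)$, i.e. $d(i,j)\le(1+\epsilon)\tilde D(i)$. Because $j\in P_{APX}$, we get $d(i,P_{APX})\le d(i,j)\le(1+\epsilon)\tilde D(i)$. Summing over all $i\in V$ gives
\begin{equation}
\pounds_M(P_{APX})=\sum_{i\in V} d(i,P_{APX})\le(1+\epsilon)\sum_{i\in V}\tilde D(i)=(1+\epsilon)\sum_{i\in V}\sum_{j\in V}d(i,j)\tilde x_{i,j}.
\end{equation}
Taking the maximum over $i\in V$ gives $\pounds_C(P_{APX})=\max_{i\in V} d(i,P_{APX})\le(1+\epsilon)\max_{i\in V}\tilde D(i)\le(1+\epsilon)\tilde C$, where $\tilde C$ is the optimal value of the variable $C$ in $I_L$ (using constraint $\sum_{j}d(i,j)\tilde x_{i,j}\le \tilde C$). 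Adding the two inequalities,
\begin{equation}
\pounds_C(P_{APX})+\pounds_M(P_{APX})\le(1+\epsilon)\Bigl(\sum_{i\in V}\sum_{j\in V}d(i,j)\tilde x_{i,j}+\tilde C\Bigr)=(1+\epsilon)\,\mathrm{OPT}_{L},
\end{equation}
where $\mathrm{OPT}_L$ is the optimal objective value of $I_L$, the fractional $p$DP.

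Finally I would invoke the fact, already noted in the text, that $\mathrm{OPT}_L$ is a lower bound on the optimal centdian-distance $\mathrm{OPT}$ of the $p$DP (since $I_L$ is a relaxation of $I$). Combining, $\pounds_C(P_{APX})+\pounds_M(P_{APX})\le(1+\epsilon)\mathrm{OPT}_L\le(1+\epsilon)\mathrm{OPT}$, which is precisely the claimed approximation guarantee. The polynomial running time follows because $I_L$ is solvable in polynomial time~\cite{karlp,kha}, Lemma~\ref{fkdp} gives the $\tilde x$ values, the neighborhood sets and the SCP instance are constructed in polynomial time, and $A_{SCP}$ runs in polynomial time.

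I expect the main obstacle to be the cardinality bound, specifically the step showing that scaling the LP solution $\tilde y$ by $(1+\epsilon)/\epsilon$ yields a feasible fractional set cover and that the LP/ILP set-cover gap together with the greedy guarantee then delivers the $(1+1/\epsilon)(\ln|V|+1)p$ bound; one must be careful that $A_{SCP}$'s approximation is measured against the \emph{integral} optimum and that the fractional cover value upper-bounds that integral optimum. The approximation-ratio half is comparatively routine once Lemma~\ref{fkdp1} and the set-cover property are in hand.
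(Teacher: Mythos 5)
Your overall strategy is the same as the paper's: round the LP solution of ($I_L$) through the set-cover instance built from the neighborhoods $N(i)$, invoke Lemma~\ref{fkdp1} to bound the fractional cover, and use the greedy guarantee plus the fact that ($I_L$) is a relaxation of ($I$). The second half of your argument (the $(1+\epsilon)$ bound on the centdian-distance) is correct and essentially identical to the paper's, and in fact your bookkeeping is slightly cleaner: you compare $\sum_i\tilde D(i)+\max_i\tilde D(i)$ to the full LP objective $\sum_{i,j}d(i,j)\tilde x_{i,j}+\tilde C$ and only then to the integral optimum, whereas the paper splits the comparison termwise into $\pounds_M(P_{OPT})$ and $\pounds_C(P_{OPT})$, which is only valid through the same aggregate relaxation argument you make explicit.

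There is, however, a genuine error in your cardinality bound, at exactly the step you yourself flagged as the obstacle. From Lemma~\ref{fkdp1} you correctly conclude that $\frac{1+\epsilon}{\epsilon}\tilde y$ is a \emph{feasible fractional} set cover of value $(1+1/\epsilon)p$; but your next sentence, ``therefore the optimal (integral) set cover has size at most the value of this fractional cover,'' reverses the direction of the LP bound. A feasible fractional cover upper-bounds the \emph{fractional} optimum, and the fractional optimum \emph{lower}-bounds the integral optimum; the integrality gap of set cover is $\Theta(\log n)$, so no integral cover of size $(1+1/\epsilon)p$ need exist, and the chain ``greedy $\le(\ln|V|+1)\cdot$ integral OPT $\le(\ln|V|+1)(1+1/\epsilon)p$'' breaks at its second link. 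The conclusion is nevertheless salvageable, and this is how the paper argues: the Chv\'atal--Lov\'asz analysis~\cite{chv,lov} shows that the greedy cover is within a factor $\ln|\mathcal U|+1$ of the optimal \emph{fractional} cover, not merely of the integral optimum. With that stronger guarantee your own fractional cover of value $(1+1/\epsilon)p$ immediately gives $|\mathcal S'|\le(\ln|V|+1)(1+1/\epsilon)p$, which is the bound claimed in the theorem; you should restate the step in this form rather than passing through the integral set-cover optimum.
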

 
\begin{proof}
Let $P_{OPT}$ be the optimal solution for the $p$DP.
Clearly, 
by Step 5 and Step 6,  a subset $\mathcal{S}_j$ contains the element $u_i$ in $\mathcal{U}$ if $d(i,j) \leq (1+\epsilon)\tilde{D}(i)$, where $i$ is the corresponding vertex of $u_i$ and $j$ is the corresponding vertex of $\mathcal{S}_j$, and each $i \in V$, $\sum_{j \in V}$ $d(i,j) x_{i,j} \leq (1+\epsilon)\tilde{D}(i)$
Hence, we have  
\begin{eqnarray}
\pounds_M(P_{APX})+\pounds_C(P_{APX})
&\le& \sum_{i \in V} \sum_{j \in V} d(i,j) x_{i,j}  
+\max_{i \in V}  \sum_{j \in V} d(i,j) x_{i,j} \nonumber\\
&\le& \sum_{i \in V}  (1+\epsilon)\tilde{D}(i)  
+\max_{i \in V} (1+\epsilon)\tilde{D}(i) \nonumber\\ 
&\le& (1+\epsilon) \pounds_M(P_{OPT}) 
+ (1+\epsilon) \pounds_C(P_{OPT}), \nonumber\\ \nonumber
\end{eqnarray}
since the {\it centdian-distance} of the fractional $p$DP is a lower bound on 
the {\it centdian-distance} of the optimal solution for the $p$DP.

Then we show $|P_{APX}| \le (1+1/\epsilon)(ln|V|+1)p$. By~\cite{lin} and Lemma~\ref{fkdp1}, we have the cardinality  of set  for 
the optimal fractional cover is less than $(1+1/\epsilon)p$ and the cardinality of set by the output of the greedy algorithm is 
at most $(\ln|\mathcal{U}|+1)$~\cite{chv,lov}  of the cardinality  of set for the optimal fractional cover. 
Immediately, we have $|P_{APX}| \le (1+1/\epsilon)(ln|V|+1)p$.
\end{proof}

\section{An Approximation Algorithm for the  CDP}
\label{sectapp2}

In this section, we show a $(1+1/\epsilon)(ln|V|+1)$-approximation algorithm for the CDP  satisfying the {\it centdian-distance}  is less than or equal to $(1+\epsilon)U$, $\epsilon>0$. We only run Algorithm APX-$p$DP for the $p$DP, for $p=1$ to $|V|$ and find the first {\it centdian set} such its {\it centdian-distance} is less than or equal to $(1+\epsilon)U$. 

For the completeness, we describe the approximation algorithm for the CDP and obtain  the {\it centdian set} $P_{\gamma}$ as follows. 

\begin{description}
\item[\textbf{Algorithm APX-CDP}]
\item[\textbf{Input}]  A  connected, undirected complete graph $G=(V,E,\ell)$ with  a nonnegative length function $\ell$ on edges, an 
integer $U>0$  and  a real number $\epsilon$, $0<\epsilon<1$.
\item[\textbf{Output:}]  A vertex set $P_{\gamma}$ with $\pounds_C(P_{\gamma})+\pounds_M(P_{\gamma}) \leq (1+\epsilon)U$.
 \item[1.] Let $p=1$ and  $P_{\gamma} \leftarrow \emptyset$.
\item[2.] Use Algorithm APX-$p$DP to find a vertex set $P_p$ that satisfies Theorem~\ref{theorem pdp}.
\item[3.] 
{\bf If} $\pounds_C(P_{p})+\pounds_M(P_{p})>(1+\epsilon)U$  {\bf then} 
\begin{description}
 Let $p=p+1$ and go to step 2. 
\end{description}
\item[4.]  Let   $P_{\gamma} \leftarrow P_p$.
\end{description}

\begin{theorem} Algorithm APX-CDP is a $(1+1/\epsilon)(ln|V|+1)$-approximation algorithm for the CDP
satisfying the {\it centdian-distance} is less than or equal to $(1+\epsilon)U$,  in which $\epsilon>0$.
\end{theorem}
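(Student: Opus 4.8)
The plan is to show that the monotone search in Algorithm APX-CDP halts at a value of $p$ no larger than the optimal CDP cardinality, and then to read off the two guarantees directly from Theorem~\ref{theorem pdp}. Throughout I work with the metric completion of $G$ (replace every non-edge $(u,v)$ by an edge of length $d(u,v)$), which leaves all quantities $\pounds_C(\cdot)$ and $\pounds_M(\cdot)$ unchanged and makes $G$ complete so that Algorithm APX-$p$DP is applicable. Let $P^{*}$ be an optimal solution of the CDP and set $p^{*}=|P^{*}|$, so $\pounds_C(P^{*})+\pounds_M(P^{*})\le U$ and $p^{*}$ is the smallest possible cardinality of a feasible set; note $p^{*}\le |V|$ since $V^{\prime}=V$ has centdian-distance $0\le U$.

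The crucial step is to analyze the iteration $p=p^{*}$. Because $|P^{*}|=p^{*}$ and $\pounds_C(P^{*})+\pounds_M(P^{*})\le U$, the set $P^{*}$ is itself a feasible solution of the $p$DP with $p=p^{*}$; hence the optimal $p^{*}$DP value satisfies $\pounds_C(P_{OPT})+\pounds_M(P_{OPT})\le U$. Applying Theorem~\ref{theorem pdp} to the call of Algorithm APX-$p$DP on input $p^{*}$, the returned set $P_{p^{*}}$ obeys $\pounds_C(P_{p^{*}})+\pounds_M(P_{p^{*}})\le (1+\epsilon)(\pounds_C(P_{OPT})+\pounds_M(P_{OPT}))\le (1+\epsilon)U$, so the stopping test in Step 3 succeeds no later than iteration $p=p^{*}$. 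In particular the algorithm terminates, and if $\gamma$ is the value of $p$ at termination, then $\gamma\le p^{*}$.

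It remains to verify the two bounds for the output $P_{\gamma}$. Feasibility is immediate from the stopping condition: $\pounds_C(P_{\gamma})+\pounds_M(P_{\gamma})\le (1+\epsilon)U$. For the cardinality, Theorem~\ref{theorem pdp} applied with $p=\gamma$ yields $|P_{\gamma}|\le (1+1/\epsilon)(\ln|V|+1)\,\gamma$, and since $\gamma\le p^{*}$ and $p^{*}$ is exactly the optimum of the CDP we get $|P_{\gamma}|\le (1+1/\epsilon)(\ln|V|+1)\,p^{*}$, i.e. an approximation ratio of $(1+1/\epsilon)(\ln|V|+1)$. Finally, at most $|V|$ iterations are performed, each executing Algorithm APX-$p$DP once (a polynomial-time linear program together with the polynomial-time greedy set-cover routine), so Algorithm APX-CDP runs in polynomial time.

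I expect essentially no serious obstacle here, since the real work is packaged inside Theorem~\ref{theorem pdp}; the only point demanding care is the bicriteria bookkeeping. One must use that the $p^{*}$DP optimum is bounded by $U$ itself — not merely by $(1+\epsilon)U$ — so that after the $(1+\epsilon)$ blow-up incurred by Algorithm APX-$p$DP the stopping test is still met at $p=p^{*}$; this is precisely what forces $\gamma\le p^{*}$ and hence the stated ratio. A secondary detail is to check that Algorithm APX-$p$DP is only ever invoked with admissible $p\le|V|$, which holds because the search is guaranteed to stop by iteration $p^{*}$.
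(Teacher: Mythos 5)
Your proof is correct and follows essentially the same route as the paper: run Algorithm APX-$p$DP at the optimal CDP cardinality $p^{*}=|P^{\prime}|$, use Theorem~\ref{theorem pdp} to show the stopping test is already met there, conclude $\gamma\le p^{*}$, and then apply Theorem~\ref{theorem pdp} at $p=\gamma$ for the cardinality bound. If anything your feasibility step is tidier than the paper's: you read $\pounds_C(P_{\gamma})+\pounds_M(P_{\gamma})\le(1+\epsilon)U$ directly off the stopping condition, whereas the paper routes it through $\pounds_C(P^{\gamma})+\pounds_M(P^{\gamma})\le\pounds_C(P^{\prime\prime})+\pounds_M(P^{\prime\prime})$, an inequality that points the wrong way when $\gamma<|P^{\prime\prime}|$; your metric-completion and termination remarks cover details the paper leaves implicit.
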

\begin{proof}

Let $P^{\prime}$ be the {\it centdian set} of  optimal solutions for the CDP with an integer $U$.
We have $\pounds_C(P^{\prime})+\pounds_M(P^{\prime}) \le U$. 
Let  $P^{\prime\prime}$ (respectively, $P^{\gamma}$) be the {\it centdian set} of optimal solutions for the $p$DP 
with $p=|P^{\prime}|$ (respectively, $p=\gamma$). 
Clearly, $\pounds_C(P^{\prime\prime})+\pounds_M(P^{\prime\prime}) \le \pounds_C(P^{\prime})+\pounds_M(P^{\prime}) \le U$.
If $p=|P^{\prime\prime}|$, Algorithm APX-CDP returns a {\it centdian set} $P_{|P^{\prime\prime}|}$ such that 
$\pounds_C(P_{|P^{\prime\prime}|})+\pounds_M(P_{|P^{\prime\prime}|}) \le (1+\epsilon) \pounds_C(P^{\prime\prime})+\pounds_M(P^{\prime\prime}) 
\le (1+\epsilon)U$. 
Since Algorithm APX-CDP returns the  first {\it centdian set} such its {\it centdian-distance} is less than or equal to $(1+\epsilon)U$, 
we have that  $\gamma$ is less than or equal to $|P^{\prime\prime}|$.
By Theorem~\ref{theorem pdp}, we have 
\begin{eqnarray}
|P_{\gamma}| &\le& (1+1/\epsilon)(ln|V|+1) \gamma 
\le (1+1/\epsilon)(ln|V|+1) |P^{\prime\prime}|= (1+1/\epsilon)(ln|V|+1) |P^{\prime}|, \nonumber
\end{eqnarray}
 and
\begin{eqnarray}
\pounds_C(P_{\gamma})+\pounds_M(P_{\gamma}) &\le&  (1+\epsilon) (\pounds_C(P^{\gamma})+\pounds_M(P^{\gamma}))  \nonumber\\ 
&\le&  (1+\epsilon) (\pounds_C(P^{\prime\prime})+\pounds_M(P^{\prime\prime})) \nonumber\\ 
&\le& (1+\epsilon)  (\pounds_C(P^{\prime})+\pounds_M(P^{\prime}))  \nonumber\\ 
&\le& (1+\epsilon)  U.  \nonumber
\end{eqnarray}
\end{proof}

\section{Conclusion}
\label{result}
In this paper, we have investigated the $p$DP  and the CDP and prove that these problems are NP-Complete even when the {\it centdian-distance} is $\pounds_C(V^{\prime})+\pounds_M(V^{\prime})$. 
Then we have presented  non-trivial brute force exact algorithms for the $p$DP and the  CDP, respectively.
Moreover, we have designed a  $(1+\epsilon)$-approximation algorithm for the $p$DP 
satisfying the cardinality of the {\it centdian set} is less than or equal to $(1+1/\epsilon)(ln|V|+1)p$ and a
$(1+1/\epsilon)(ln|V|+1)$-approximation algorithm for the CDP 
satisfying the {\it centdian-distance} is less than or equal to $(1+\epsilon)U$,  in which $\epsilon>0$.
It would be interesting to find approximation complexities for the $p$DP and the CDP. 
Another direction for future research  is whether the $p$DP has a polynomial time exact algorithm for some special graphs.

\end{document}